\newtheorem{prop}{Proposition}[section]
\newtheorem{thm}[prop]{Theorem}
\newtheorem{cor}[prop]{Corollary}
\theoremstyle{definition}
\newtheorem{ex}[prop]{Example}
\begin{document}
\title{On a Class of Half-Factorial Domains}

\author{Mark Batell}
\address{Department of Mathematical Sciences\\
	University of Cincinnati\\
	Cincinnati, OH 45221}
\email{mark.batell@uc.edu}

\keywords{Factorization, unique factorization}
\subjclass[2000]{Primary: 13F15}

\begin{abstract}
Let $R$ be an integral domain. For elements $a,b \in R$, let $[a,b]$ denote their greatest common divisor, if it exists. We say that $R$ has the \emph{Z-property} if whenever $a,b,c,d$ and $e$ are nonzero nonunits of $R$ such that $abc=de$, then $[ab,d] \neq 1$ or $[ab,e] \neq 1$. The purpose of this paper is to study this property. The atomic integral domains that have this property constitute a class of half-factorial domains. Also, it is known that $R$ must have this property in order for the polynomial ring $R[x]$ to be half-factorial.  We use it to give a characterization of half-factorial polynomial rings in the case where every $v$-ideal is $v$-generated by two elements. We also show that if $R$ is a Krull domain with this property, then $R$ has torsion class group. 
\end{abstract}

\maketitle

\section{Introduction}

Let $R$ be an integral domain with quotient field $K$. For a pair of elements $a,b \in R$ we shall use the symbol $[a,b]$ to denote their greatest common divisor, if it exists. If $x$ is an indeterminate and $f \in K[x]$ is a polynomial, the symbol $A_f$ will be used to denote the fractional ideal generated by the coefficients of $f$. A \emph{primitive ideal} is an ideal $I \subseteq R$ such that $I=A_f$ for some primitive polynomial $f \in R[x]$.  Note that a polynomial of degree one, say $f=a_1x+a_0$, is irreducible in $R[x]$ if and only if it is primitive.

An integral domain $R$ is a \emph{half-factorial domain} (HFD) if $R$ is atomic and $m=n$ whenever $\alpha_1 \alpha_2 \cdots \alpha_n=\beta_1 \beta_2 \cdots \beta_m$ and the $\alpha$'s and $\beta$'s are irreducible elements (atoms) of $R$. We shall say that $R$ has the \emph{Z-property} if whenever $a,b,c,d,e \in R$ are nonzero nonunits such that $abc=de$, then $[ab,d] \neq 1$ or $[ab,e] \neq 1$, i.e., $ab$ and $d$ have a common factor or $ab$ and $e$ have a common factor. Atomic domains that satisfy the Z-property form a class of half-factorial domains (see Theorem \ref{z} below).

\begin{ex}
We show that if $R$ is the domain given by

\[
R:=F[x,y,zx,zy] 
\]\

\noindent then $R$ does not have the Z-property. Here, $F$ is a field with two elements and $x,y,z$ are indeterminates. In $R$ we have the equation 

\[
(x)(x)(zy)(zy)=(zx)(zx)(y)(y)
\]\

\noindent and it is easy to see that $[x^2,y^2]=1$ and $[x^2,z^2x^2]=1$. It follows that $R$ does not have Z-property.
\end{ex}

If $R$ is an HFD, then by definition any two irreducible factorizations of a nonzero nonunit have the same length. For a nonzero nonunit $x$ satisfying this condition, we define the length $\ell(x)$ to be the number of irreducible factors that appear in a factorization of $x$. 

In this paper, we consider the half-factorial property in polynomial rings and the closely related Z-property. We will establish a new result for polynomial rings in addition to some links between the Z-property and the half-factorial property. In order to provide the proper context, we will now give the statements and proofs of some earlier results from \cite{B}. 

\begin{thm} \cite[Proposition 3.1]{B}
Let $R$ be a domain and $x$ an indeterminate. If $R[x]$ is an HFD, then $R$ has the Z-property.
\end{thm}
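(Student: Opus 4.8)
The plan is to prove the contrapositive: assume $R$ does not have the Z-property and deduce that $R[x]$ is not an HFD. So suppose there exist nonzero nonunits $a,b,c,d,e \in R$ with $abc = de$ but $[ab,d] = 1$ and $[ab,e] = 1$. The idea is to build from this relation two polynomials whose product factors into irreducibles in $R[x]$ in two essentially different ways, yielding factorizations of unequal length.

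The key construction I would use is the linear polynomial $f = ab\,x + de$ (equivalently $ab\,x + abc$, since $de = abc$). The guiding intuition is that the coefficient relation $de = abc$ lets the constant term absorb the factor $c$ on one side, while the condition that $ab$ is coprime to both $d$ and $e$ forces the degree-one factorizations of $f$ to behave asymmetrically. First I would consider the polynomial $g = (ab\,x + d)(ab\,x + e)$, whose expansion has constant term $de = abc$ and leading coefficient $(ab)^2$; comparing this with a factorization that peels off $c$ from the constant term should produce the length discrepancy. Concretely, I expect to exhibit one product of linear (hence, by the remark in the introduction, irreducible when primitive) polynomials and another factorization involving the atoms of $c$ in $R$, so that the two irreducible factorizations of a single element of $R[x]$ have different numbers of factors.

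Carrying this out, the central step is to verify primitivity. A degree-one polynomial $a_1 x + a_0$ is irreducible in $R[x]$ exactly when it is primitive, i.e. $[a_1,a_0]=1$. So I would check that the linear factors I write down are genuinely primitive, which is precisely where the hypotheses $[ab,d]=1$ and $[ab,e]=1$ are consumed: these coprimality conditions guarantee that the factors $ab\,x+d$ and $ab\,x+e$ cannot be further reduced, so they count as single atoms. Meanwhile, on the other side of the equation the factor $c$ (a nonunit of $R$) contributes at least one additional atom from $R$, and I would use the fact that atoms of $R$ remain atoms in $R[x]$ together with a degree count to pin down the lengths exactly.

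The main obstacle I anticipate is controlling content and cancellation carefully enough to be sure the two factorizations really do have different lengths rather than merely looking different. The subtlety is that when one multiplies linear polynomials the contents can conspire: one must rule out the possibility that a common nonunit factor can be extracted and redistributed so as to rebalance the lengths. This is exactly what the coprimality hypotheses are designed to prevent, and I would lean on Gauss-type content arguments (tracking $A_f$ for the relevant polynomials) to show that no such rebalancing is possible, thereby forcing the two factorizations to certify that $R[x]$ fails the half-factorial property.
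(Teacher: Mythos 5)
Your construction is the right one---the paper's own proof uses exactly the polynomials $f=abx+d$ and $g=abx+e$, irreducible because primitive, which is precisely where the hypotheses $[ab,d]=1$ and $[ab,e]=1$ are consumed, just as you say---but the step that actually produces the length discrepancy is misidentified, and this is a genuine gap. The element $c$ is \emph{not} a factor of $fg$, so it cannot ``contribute at least one additional atom'': in general $c$ divides only the constant term $de=abc$ of $fg$ and not the leading coefficient $(ab)^2$. The correct use of the relation $de=abc$ is different: it shows that $ab$ divides \emph{every} coefficient of $fg$, namely
\[
(abx+d)(abx+e)=(ab)^2x^2+ab(d+e)x+de=ab\bigl(abx^2+(d+e)x+c\bigr),
\]
so $fg=a\cdot b\cdot h$ with $h=abx^2+(d+e)x+c\in R[x]$. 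The extra atoms come from $a$ and $b$, not from $c$; the atoms of $c$ (if $c$ has any---$R$ is not assumed atomic) need never appear, since $c$ is merely the constant term of $h$, which may itself be irreducible. Relatedly, your opening ``key construction'' $f=abx+de$ is a non-starter as an atom: $abx+de=ab(x+c)$ is visibly not primitive, hence not irreducible.

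The ``main obstacle'' you anticipate---ruling out a redistribution of contents that rebalances the lengths---is not an obstacle at all, and no Gauss-type content argument is needed. If $R[x]$ is an HFD it is in particular atomic, so each of the three nonunits $a$, $b$, $h$ factors into at least one atom of $R[x]$, giving the single element $fg=abh$ an atomic factorization of length at least $3$; on the other hand $f\cdot g$ is an atomic factorization of that same element of length exactly $2$. The definition of HFD forbids this, and that contradiction is the whole proof. Once you replace the miscounted role of $c$ by the displayed identity above, your argument becomes the paper's argument.
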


\begin{proof}
Assume $a,b,c,d,e \in R$ are nonzero nonunits such that $abc=de$, $[ab,d]=1$ and $[ab,e]=1$.  We consider the polynomials $f=abx+d$ and $g=abx+e$ in $R[x]$. Note that $f$ and $g$ are irreducible in $R[x]$ and that

\[
fg=abh
\]\

\noindent where $h=abx^2+(d+e)x+c$. It follows that $R[x]$ is not an HFD. 
\end{proof}

\begin{thm} \label{z} \cite[Proposition 3.2]{B}
Let $R$ be an atomic domain. If $R$ has the Z-property, then $R$ is an HFD. 
\end{thm}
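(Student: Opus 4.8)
The plan is to prove, by strong induction on $n$, the statement $P(n)$: every nonzero nonunit having a factorization into $n$ atoms has all of its atomic factorizations of length $n$. This immediately yields the half-factorial property, and atomicity guarantees that every nonzero nonunit has at least one such factorization. The base case $P(1)$ is immediate: if an atom $\alpha_1$ equals $\beta_1 \cdots \beta_m$, then $m \geq 2$ would exhibit $\alpha_1$ as a product of two nonunits, contradicting irreducibility, so $m=1$.

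For the inductive step I would assume $P(1), \dots, P(n-1)$ with $n \geq 2$ and take an element $x = \alpha_1 \cdots \alpha_n = \beta_1 \cdots \beta_m$ whose factors are all atoms. First I would dispose of the case $m < n$: here $P(m)$, available since $m \le n-1$, applied to the length-$m$ factorization forces every factorization of $x$, in particular the length-$n$ one, to have length $m$, so $n = m$, a contradiction. Hence $m \ge n$, and it remains to rule out $m > n$ (so that $m \ge 3$). The key move is to feed the grouping $\beta_1 \cdot \beta_2 \cdot (\beta_3 \cdots \beta_m) = \alpha_1 \cdot (\alpha_2 \cdots \alpha_n)$ into the Z-property: all five grouped factors are nonzero nonunits, the two compound factors $\beta_3 \cdots \beta_m$ and $\alpha_2 \cdots \alpha_n$ being nonunits precisely because $m \ge 3$ and $n \ge 2$. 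The Z-property then yields $[\beta_1\beta_2, \alpha_1] \neq 1$ or $[\beta_1\beta_2, \alpha_2 \cdots \alpha_n] \neq 1$.

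In either case the goal is to produce an atom $\pi$ dividing $\beta_1\beta_2$ together with a factorization $x = \pi w$ in which $w$ admits a factorization of length $n-1$. In the first case $\alpha_1$ is an atom sharing a nonunit factor with $\beta_1\beta_2$, so that common factor is an associate of $\alpha_1$ and hence $\alpha_1 \mid \beta_1\beta_2$; taking $\pi = \alpha_1$ and $w = \alpha_2 \cdots \alpha_n$ does the job directly. In the second case I would pick any atom $\pi$ dividing the common factor of $\beta_1\beta_2$ and $\alpha_2 \cdots \alpha_n$; writing $\alpha_2 \cdots \alpha_n = \pi s$ and invoking $P(n-1)$ on the length-$(n-1)$ element $\alpha_2 \cdots \alpha_n$ pins down $s$ to have length $n-2$, so that $w := \alpha_1 s$ has length $n-1$ and $x = \pi w$. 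Crucially, $P(n-1)$ is applied to the proper subproduct $\alpha_2 \cdots \alpha_n$, not to $x$ itself, which is what keeps the argument from becoming circular.

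To finish, since $\pi$ is an atom dividing $\beta_1 \beta_2$ I would write $\beta_1\beta_2 = \pi r$; here $r$ must be a nonunit, for otherwise $\pi$ would be associate to the product of two nonunits $\beta_1\beta_2$ and so fail to be irreducible. Cancelling $\pi$ from $x = \pi w = \pi r \beta_3 \cdots \beta_m$ gives $w = r \beta_3 \cdots \beta_m$, a factorization of length at least $(m-2)+1 = m-1 > n-1$. But $w$ also has a factorization of length $n-1$, so $P(n-1)$ is violated, the desired contradiction. I expect the main obstacle to be the second disjunct of the Z-conclusion: unlike a shared atom, it supplies only a common factor with the large product $\alpha_2 \cdots \alpha_n$, and extracting usable length information from it without circularity is exactly what forces the careful application of the induction hypothesis to a proper subproduct. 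It is worth emphasizing that plain cancellation cannot by itself drive the induction, since in a genuine HFD two factorizations of $x$ may share no atom at all; the Z-property is precisely the device that substitutes for this missing coprimality.
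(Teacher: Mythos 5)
Your proof is correct and follows essentially the same route as the paper's: both argue by induction on factorization length and apply the Z-property to the same grouping (a pair of atoms taken from the longer factorization against the shorter factorization split into a single atom and the rest), then finish by extracting a common factor, cancelling it, and invoking the induction hypothesis. The differences are purely organizational---you induct on the length $n$ of one factorization and unify the two gcd cases through a single atom $\pi$ dividing $\beta_1\beta_2$, while the paper inducts on the shorter length $m$, handles the base case $m=2$ separately, and treats the two cases with individual substitution arguments---but the underlying mechanism is identical.
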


\begin{proof}
Assume 
\begin{equation}\label{original}
\alpha_1\alpha_2 \cdots \alpha_n=\beta_1\beta_2 \cdots \beta_m 
\end{equation} 
where the $\alpha$'s and $\beta$'s are irreducible elements of $R$.  We assume $n>m$ and derive a contradiction. Consider first the case $m=2$. Then $n \geqslant 3$; hence the Z-property implies that $[\alpha_1\alpha_2,\beta_1] \neq 1$ or $[\alpha_1\alpha_2,\beta_2] \neq 1$. Since each $\beta$ is irreducible we may assume without loss of generality that
\begin{equation}\label{eq2}
\alpha_1\alpha_2=\beta_1 x  
\end{equation} 
for some $x \in R$.
Using \eqref{eq2} in \eqref{original} we get
\[
x \alpha_3 \cdots \alpha_n=\beta_2
\]
so $x$ is a unit, contrary to equation \eqref{eq2}.  Done with the case $m=2$, we now proceed by induction. Thus, assume that if an element has a factorization of length $\leqslant m-1$, then all factorizations of that element have the same length, and consider an equation \eqref{original} with $n>m$.  Then either (i) $[\alpha_1\alpha_2,\beta_m] \neq 1$ or (ii) $[\alpha_1\alpha_2,\beta_1\beta_2 \cdots \beta_{m-1}] \neq 1$.  In case (i) we have 
\[
\alpha_1\alpha_2=\beta_m x
\]
for some $x$, so the induction assumption implies that $\ell(x)=1$. Equation \eqref{original} becomes
\begin{equation}\label{eq3}
x \alpha_3 \cdots \alpha_n=\beta_1 \cdots \beta_{m-1}
\end{equation}
and the induction assumption applied to \eqref{eq3} yields $m-1=n-1$, i.e., $m=n$.  Case (ii) implies $\alpha_1\alpha_2=\xi x$ and $\beta_1 \cdots \beta_{m-1}=\xi y$ for some nonunit $\xi \in R$ so $\ell(x)=1$  and $\ell(y)=m-2$. The original equation \eqref{original} becomes $x\alpha_3 \cdots \alpha_n=y\beta_m$, so $\ell(y)+1=n-1$, i.e.,  $m=n$.
\end{proof}

It will also be convenient toward the end of the paper to have the following result. 

\begin{prop}\label{factor} 
\cite[Proposition 3.5]{B} Assume $R$ is atomic and integrally closed. Then the following statements are equivalent.
\begin{itemize}
\item[a)] $R$ has the Z-property
\item[b)] Whenever $f=a_0+a_1x$ and $g=b_0+b_1x$ are two primitive polynomials of degree 1 over $R$ and $c \in R$ is a constant factor of the product $fg$, then $c$ is a unit or an irreducible element of $R$. 
\end{itemize}
\end{prop}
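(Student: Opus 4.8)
The plan is to prove the two implications separately; (b) $\Rightarrow$ (a) is the elementary one and essentially reverses the construction used to show that $R[x]$ being an HFD forces the Z-property. To prove it, suppose $a,b,c,d,e$ are nonzero nonunits with $abc=de$ and, toward a contradiction, that $[ab,d]=1$ and $[ab,e]=1$. I form $f=abx+d$ and $g=abx+e$. The two coprimality hypotheses say precisely that $f$ and $g$ are primitive of degree one, and expanding gives $fg=ab\,h$ with $h=abx^2+(d+e)x+c$, where the constant coefficient works out to $c$ because $abc=de$. Thus $ab$ is a constant factor of $fg$; but $ab=a\cdot b$ is a product of two nonunits, hence neither a unit nor irreducible, contradicting (b). Therefore $[ab,d]\neq 1$ or $[ab,e]\neq 1$, which is the Z-property. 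Note this direction needs neither atomicity nor integral closedness.

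For (a) $\Rightarrow$ (b) I assume the Z-property, so by Theorem \ref{z} $R$ is an HFD and a length function $\ell$ is available. Let $f=a_0+a_1x$ and $g=b_0+b_1x$ be primitive of degree one and let $c$ be a constant factor of $fg$. The first key step is to upgrade the single divisibility $c\mid fg$ — which a priori only says $c$ divides each of $a_0b_0$, $a_0b_1+a_1b_0$, $a_1b_1$ — to the full collection $c\mid a_ib_j$ for all $i,j\in\{0,1\}$. This is exactly where integral closedness is used: writing $fg=c\,h$ gives $A_{fg}=cA_h\subseteq (c)$, and the content formula $(A_fA_g)_v=(A_{fg})_v$, valid in integrally closed domains, then yields $A_fA_g\subseteq (A_fA_g)_v\subseteq (c)$, so $c$ divides every generator $a_ib_j$ of $A_fA_g$. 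Equivalently, one may argue valuation-theoretically: $R$ is the intersection of its valuation overrings, Gauss's lemma holds in each valuation ring, so $v(c)\le \min_i v(a_i)+\min_j v(b_j)\le v(a_i)+v(b_j)$ for every such valuation $v$, whence $c\mid a_ib_j$.

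With these cross-divisibilities I would argue by contradiction, supposing $c=c_1c_2$ with $c_1,c_2$ nonunits. A short preliminary check, using primitivity, rules out any $a_i$ or $b_j$ being zero or a unit: for instance if $a_1$ were a unit then $c\mid a_1b_0$ and $c\mid a_1b_1$ would force $c$ to be a common divisor of $b_0$ and $b_1$, so by primitivity of $g$ the element $c$ would be a unit. Thus all four coefficients are nonzero nonunits. I would then feed the relations $c_1c_2h_0=a_0b_0$ and $c_1c_2h_2=a_1b_1$ into the Z-property, which forces $c$ to share a nonunit factor with $a_0$ or $b_0$, and with $a_1$ or $b_1$; confronting these forced common factors with the coprimality relations $[a_0,a_1]=1$ and $[b_0,b_1]=1$ coming from primitivity should produce the contradiction.

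The main obstacle is precisely this last combinatorial step. Length considerations alone do not suffice: the length identities extracted from the coefficient relations, namely $\ell(a_i)+\ell(b_j)\ge \ell(c)$ for all $i,j$ together with the constant- and leading-coefficient equalities, are mutually consistent with $\ell(c)\ge 2$. Hence the argument must exploit the finer common-factor information that the Z-property provides, rather than a counting argument, and it must also treat the degenerate cases in which $h_0$ or $h_2$ is a unit (so that $c$ is an associate of $a_0b_0$ or of $a_1b_1$) separately, since the Z-property applies only when all five factors in play are nonzero nonunits.
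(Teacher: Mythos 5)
Your proof of (b) $\Rightarrow$ (a) is complete and correct: it is exactly the construction the paper uses to show that $R[x]$ being an HFD forces the Z-property (take $f=abx+d$, $g=abx+e$, and read off the constant factor $ab$ of $fg$), and your remark that this direction needs neither atomicity nor integral closedness is accurate. The first half of your (a) $\Rightarrow$ (b) argument is also sound: the upgrade from $c \mid fg$ to $c \mid a_ib_j$ for all $i,j$ via the content formula $(A_fA_g)_v=(A_{fg})_v$ (or via valuation overrings) is precisely where integral closedness enters, and the elimination of the degenerate cases (a coefficient zero or a unit, or $a_0b_0/c$, $a_1b_1/c$ a unit) can be carried out as you indicate. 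Note, for what it is worth, that the paper itself contains no proof of this proposition --- it is quoted from the dissertation [B, Proposition 3.5] --- so your argument has to stand entirely on its own.

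It does not stand: the final ``combinatorial step'' is a genuine gap, not a detail, and in the form you propose it cannot be completed, because the facts you plan to confront are not mutually contradictory. Applying the Z-property to $c_1c_2h_0=a_0b_0$ and $c_1c_2h_2=a_1b_1$ (and to the mixed equations for $a_0b_1$, $a_1b_0$) yields only disjunctions of the form ``$c$ shares a nonunit factor with $a_0$ or with $b_0$,'' ``$c$ shares a nonunit factor with $a_1$ or with $b_1$,'' and so on. These are simultaneously satisfiable in a way fully compatible with primitivity: nothing in them forbids $c=pq$ with $p,q$ non-associate irreducibles, $p \mid a_0$ and $q \mid a_1$. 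Primitivity of $f$ excludes a \emph{common} factor of $a_0$ and $a_1$, not distinct factors of $c$ lodged one in each coefficient, and the length identities are also consistent with this scenario (e.g.\ $\ell(a_0)=\ell(a_1)=2$, $\ell(b_0)=\ell(b_1)=1$, $\ell(c)=2$, all $\ell(h_{ij})=1$). Since the premises you extract are jointly consistent, no contradiction can be derived from them alone; some finer use of the hypotheses is required. Even the most natural repair --- induct on $\ell(c)$, use the Z-property to extract an irreducible $p$ dividing both $c$ and (say) $a_0$, and check that $c/p$ is again a constant factor of $f'g$ where $f'=(a_0/p)+a_1x$ is still primitive --- only yields $\ell(c)\leqslant 2$ by the induction hypothesis, and the residual case $\ell(c)=2$ with the two irreducible factors of $c$ split between $a_0$ and $a_1$ (or between $b_0$ and $b_1$, or mixed) survives every constraint listed above. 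Until that case is closed by a new idea, the implication (a) $\Rightarrow$ (b) remains unproved.
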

 
In \cite[Theorem 2.4]{Za}, it was shown that if $R$ is a Krull domain with class group $Cl(R)$, then $R[x]$ is an HFD if and only if $\vert Cl(R) \vert \leqslant 2$. Also, in \cite{C}, it was shown that if $R[x]$ is an HFD, then $R$ is integrally closed and consequently, if $R$ is also Noetherian, then $R[x]$ is an HFD if and only if $R$ is a Krull domain with $\vert Cl(R) \vert \leqslant 2$. However, the half-factorial property in polynomial rings is not as well-understood in domains other than Krull and Noetherian domains.  We give one result in the next section which holds for more general domains.

\section{Polynomial Rings Over Half-Factorial Domains}

Let $R$ be an integral domain and consider the $v$-operation $I_v=(I^{-1})^{-1}$ on fractional ideals of $R$. If $I=(a_0,a_1,\ldots,a_n)_v$, then we will say that the elements $a_0,a_1,\ldots,a_n$ are \emph{$v$-generators} for the $v$-ideal $I$. We recall that (by definition) a Mori domain is one in which every $v$-ideal is $v$-generated by finitely many elements.

It is known that every $v$-ideal in a Krull domain is $v$-generated by two elements \cite[Corollary 44.6]{G}. With that in mind, we now state and prove the following theorem.

\begin{thm}
Let $R$ be a domain in which every $v$-finite $v$-ideal is $v$-generated by two elements.  Then $R[x]$ is an HFD if and only if each of the following conditions is satisfied:
\begin{itemize}
\item[(1)] $R$ is integrally closed,
\item[(2)] $R$ has the Z-property, and
\item[(3)] $(AB)^{-1}=\{ uv \mid u \in A^{-1}, v \in B^{-1} \}$ if $A,B \subseteq R$ are finitely generated ideals whose product $AB$ is primitive.
\end{itemize}
\end{thm}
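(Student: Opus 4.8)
The plan is to prove the two implications separately, with essentially all of the genuine work concentrated in condition (3) and in the sufficiency direction. For the ``only if'' direction, conditions (1) and (2) are already in hand: the integral closure of $R$ is the theorem of \cite{C}, and the Z-property is exactly the first result recalled above, \cite[Proposition 3.1]{B}. Thus the only new assertion is (3). One inclusion here is formal and holds in every domain: if $u \in A^{-1}$ and $v \in B^{-1}$, then $uv\,AB = (uA)(vB) \subseteq R$, so $\{uv \mid u \in A^{-1},\ v \in B^{-1}\} \subseteq (AB)^{-1}$. For the reverse inclusion I would argue by contradiction. Writing $A = A_f$ and $B = A_g$ (possible since $A,B$ are finitely generated) and taking some $w \in (AB)^{-1}$ that fails to split as $uv$, I would use $w$ together with the primitivity of $AB$ to build, in the spirit of the proof of \cite[Proposition 3.1]{B}, two irreducible factorizations of a single polynomial in $R[x]$ of unequal length; the mechanism is that a content which refuses to split is precisely what lets a constant be absorbed into a higher-degree factor, creating a length defect and contradicting the HFD hypothesis.

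For the ``if'' direction I would first record that $R[x]$ is atomic --- this must be checked before the word HFD is meaningful --- and hence so is $R$, so that by Theorem \ref{z} the domain $R$ is itself an HFD with a well-defined length $\ell_R$. The engine is the $v$-theoretic Gauss formula available under (1): since $R$ is integrally closed, $(A_{fg})_v = (A_f A_g)_v$ for all $f,g$, and a product of primitive polynomials is again primitive. Given a factorization $F = c_1 \cdots c_s\, P_1 \cdots P_t$ of a nonzero nonunit into irreducibles, with the $c_i$ atoms of $R$ and the $P_j$ irreducible of positive degree, the content satisfies $(A_F)_v = (c_1 \cdots c_s)\,(A_{P_1} \cdots A_{P_t})_v$, where the second factor is primitive. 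Each $A_{P_j}$ is a primitive $v$-finite $v$-ideal, since an irreducible polynomial of positive degree has no nonunit constant factor and so lies in no proper principal ideal; by the two-element generation hypothesis, each equals $(a_j,b_j)_v = (A_{\ell_j})_v$ for a primitive linear $\ell_j = a_j + b_j x$. This is what lets Proposition \ref{factor} enter.

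The crux is to show $s$ and $t$ are invariants of $F$. The constant count $s$ is the clean part: I would prove that the principal/primitive decomposition of a $v$-finite $v$-ideal is unique, so that the principal part $(c_1\cdots c_s)$ is determined by $(A_F)_v$ --- hence by $F$ alone --- and applying $\ell_R$ recovers a common value of $s$ across all factorizations. This reduces everything to the primitive-content case $s=0$, where one must show the number $t$ of positive-degree irreducible factors is the same for every factorization. Here I would run an induction in the style of the proof of Theorem \ref{z}, peeling factors off an equation between two factorizations: the two-element generation replaces the relevant contents by primitive linear polynomials, Proposition \ref{factor} forces any constant factor of a product of two such polynomials to be a unit or an atom, and condition (3) guarantees that the inverse, hence the content, of a primitive product splits across its factors, so that the peeling is consistent and no positive-degree factor is spuriously created or destroyed.

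The step I expect to be the main obstacle is exactly this control of $t$ in the primitive-content case. The difficulty is that neither invariant at one's disposal settles it on its own: $\deg F = \sum_j \deg P_j$ does not determine $t$ (an irreducible factor may have any degree), and the content $(A_F)_v$ likewise fails to see $t$, since under two-element generation it collapses to a single linear content regardless of how many positive-degree factors $F$ has. Reconciling degree and content therefore forces the joint use of (2), (3), and the two-element generation hypothesis, and ruling out a relation $Q_1 \cdots Q_a = \ell_1 \cdots \ell_b$ among positive-degree irreducibles with $a \neq b$ is the heart of the matter. The verification that $R[x]$ is atomic is a secondary, more routine point, but it too must be dispatched before the length count can begin.
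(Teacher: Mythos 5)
Your proposal has genuine gaps at both of its crux points, and in each case the missing content is precisely what the paper's proof supplies. For the reverse inclusion in (3) (the ``only if'' direction), your one-sentence plan --- use a non-splitting $w \in (AB)^{-1}$ to ``build two irreducible factorizations of unequal length'' --- omits the actual mechanism. The paper's argument uses the two-element $v$-generation hypothesis here (which your sketch never invokes in this direction): it writes $A_v=(a_0,a_1)_v$, $B_v=(b_0,b_1)_v$, so that $f=a_1x+a_0$ and $g=b_1x+b_0$ are \emph{linear} primitive polynomials, hence irreducible in $R[x]$ and in $K[x]$; your choice of $f,g$ with $A=A_f$, $B=A_g$ of arbitrary degree destroys this. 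It then writes $w=r/s$ with $[r,s]=1$ (legitimate because an HFD is an MCD-domain), forms $h=(x+r/s)fg \in R[x]$, and proceeds in two steps: (i) $h$ cannot be irreducible --- and ruling this out is not a bare length count, since after multiplying by $s$ one only learns that $s$ is not irreducible, and the contradiction requires the cited result \cite[Proposition 3.3]{B} controlling denominators of elements of $(c_0,c_1)^{-1}$; (ii) from $h=FG$, unique factorization in $K[x]$ plus \cite[Theorem 10.4]{G} (integral closedness) yields $F=(x+r/s)p$ with $p \in R[x]$, $fg=pG$, and then $p=wf$, $G=w^{-1}g$ for some $w \in K$, which \emph{constructively} produces the splitting $r/s=uv$. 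Step (ii) is the heart of the proof and is absent from your sketch; note also that its logic is constructive (HFD $\Rightarrow$ $h$ reducible $\Rightarrow$ $w$ splits) rather than a direct contradiction from non-splitting.

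For the ``if'' direction, you have outlined a reasonable architecture (atomicity, content decomposition, counting constant versus positive-degree factors), but you explicitly leave the central step --- ruling out relations $Q_1\cdots Q_a=\ell_1\cdots\ell_b$ of unequal length among positive-degree irreducibles --- as an ``obstacle'' and the ``heart of the matter.'' An outline that flags its own core as unresolved is not a proof. Two further cautions: the paper does not reprove this direction from scratch either, but it discharges it by citing \cite[Theorem 3.3]{B} and verifying that hypotheses a), c), d) of that theorem follow from (1)--(3) and two-element $v$-generation, and that the weaker condition (3) can replace hypothesis b) in that proof; if you intend a self-contained argument you must actually supply the induction you gesture at. And the atomicity of $R[x]$, which you call ``routine,'' is not: atomicity does not ascend from $R$ to $R[x]$ in general (this is a known failure, due to Roitman), so under these hypotheses it too requires a real argument rather than a remark.
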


\begin{proof}
(In this proof, $K$ denotes the quotient field of $R$.)

Assume each of the three given conditions is satisfied. By (1), it follows that condition a) of \cite[Theorem 3.3]{B} is satisfied. Conditions c) and d) in that theorem follow immediately from Propositions 3.3 and 3.5 of \cite{B} and our assumption that every $v$-finite $v$-ideal is $v$-generated by two elements. 

Essentially, if each of the three given conditions is satisfied, a slight modification of the proof of \cite[Theorem 3.3]{B} yields the conclusion that $R[x]$ is an HFD. As we have shown, each of the four hypotheses a)--d) in that theorem follow from (1)--(3) except b), which stated that whenever the product of two polynomials is primitive, then one of the factors in the product is superprimitive. However, the weaker condition (3) above is sufficient and can replace b). This is clear upon a detailed examination of  the proof.  

Next, we assume $R[x]$ is an HFD and show that the three conditions are satisfied. Condition (1) follows from \cite{C} and (2) follows from \cite[Proposition 3.1]{B}. For (3), assume $AB$ is primitive and some fraction $r/s \in (AB)^{-1}$ is given. We note that since $R$ is an HFD, it is also an MCD-domain \cite[22]{B}, and therefore we may assume $[r,s]=1$. Also, by assumption we may write $A_v=(a_0,a_1)_v$, $B_v=(b_0,b_1)_v$, and $(AB)_v=(c_0,c_1)_v$ for some $a_0,a_1,b_0,b_1,c_0,c_1 \in R$.  We define the polynomials $f(x)=a_1x+a_0$ and $g(x)=b_1x+b_0$, and then we consider the polynomial $h \in R[x]$ defined by the equation

\[
h=(x+r/s)fg
\]\

\noindent We show that $h$ is not irreducible. If $h$ is irreducible, we multiply both sides of the above equation by $s$ to obtain

\[
sh=(sx+r)fg
\]\

\noindent Then $s$ is not irreducible, since $R[x]$ is an HFD. This is a contradiction by \cite[Propositions 3.1 and 3.3]{B} and the fact that $r/s \in (c_0,c_1)^{-1}$.

Now, since $h$ is not irreducible, $h=FG$ for some nonunits $F,G \in R[x]$. Since $K[x]$ is a UFD, it follows that $(x+r/s)$ divides $F$ or $G$ in $K[x]$, say $F=(x+r/s)p(x)$, where $p(x) \in K[x]$. In fact, $p(x) \in R[x]$ by \cite[Theorem 10.4]{G}. Then $fg=pG$, so $p \notin R$ and $G \notin R$. Thus we may assume $p=wf$ and $G=w^{-1}g$ for some $w \in K$.  If $u=rw/s$ and $v=w^{-1}$, then $u \in A^{-1}$ and $v \in B^{-1}$. Since $r/s=uv$, condition (3) now follows.
\end{proof}

\section{Relation to Half-Factorial Domains}

In this section, we give a result which states that the Z-property and half-factorial property are equivalent under a certain set of assumptions on $R$. One of the assumptions is that $R$ satisfies the following condition $(C)$:\\

\begin{center}
$(C)$ If $I=(a_0,a_1)$ is a primitive ideal of $R$ generated by two elements, then there exists an irreducible element $\alpha \in R$ such that $\alpha \in I_v$.
\end{center}\

\begin{thm} \label{pepsi}
Let $R$ be an atomic, integrally closed domain that satisfies condition (C).  Then $R$ is an HFD if and only if $R$ has the Z-property.
\end{thm}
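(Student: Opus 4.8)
The plan is to treat the two implications separately. The implication ``$R$ has the Z-property $\Rightarrow$ $R$ is an HFD'' is immediate from Theorem \ref{z}, since $R$ is assumed atomic; notably, neither integral closedness nor condition $(C)$ is needed for this half. So the real work lies in the converse, and the idea is to reduce it to the polynomial criterion of Proposition \ref{factor}. Because $R$ is atomic and integrally closed, that proposition applies and tells us the Z-property is equivalent to condition b): every constant factor $c$ of a product $fg$ of two primitive linear polynomials $f=a_0+a_1x$, $g=b_0+b_1x$ is a unit or an irreducible. Thus, assuming $R$ is an HFD, I would fix such $f,g$ and a nonzero nonunit constant $c \mid fg$, write $fg=ch$ with $h\in R[x]$, and aim to show $c$ is irreducible.

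The engine of the argument is the passage from this polynomial divisibility to a divisibility among elements of $R$, which is where integral closedness and condition $(C)$ enter. First I would invoke the content identity $(A_{fg})_v=(A_fA_g)_v$, valid for integrally closed $R$. Since $A_{fg}=cA_h$ and $A_h\subseteq R$ (whence $(A_h)_v\subseteq R$), this yields $(A_fA_g)_v=c(A_h)_v\subseteq (c)$. Next, because $A_f=(a_0,a_1)$ and $A_g=(b_0,b_1)$ are primitive ideals generated by two elements, condition $(C)$ supplies irreducible elements $\alpha\in (A_f)_v$ and $\beta\in (A_g)_v$. Then $\alpha\beta\in (A_f)_v(A_g)_v\subseteq (A_fA_g)_v\subseteq (c)$, so $c\mid\alpha\beta$; writing $\alpha\beta=c\delta$ and using that $\ell$ is additive in an HFD, I obtain $\ell(c)+\ell(\delta)=\ell(\alpha\beta)=2$, hence $\ell(c)\le 2$.

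The main obstacle is that this length count only forces $\ell(c)\le 2$, and I must still exclude the possibility $\ell(c)=2$. So suppose, toward a contradiction, that $c$ is a nonunit that is not irreducible; then $\ell(c)=2$, so $\delta$ is a unit and $(A_fA_g)_v=c(A_h)_v=(\alpha\beta)(A_h)_v$. Since $\alpha\beta\in (A_fA_g)_v$, cancelling $\alpha\beta$ forces $1\in (A_h)_v$, so $(A_h)_v=R$ and therefore $(A_fA_g)_v=(\alpha\beta)$ is principal. Now I would exploit $\alpha\in (A_f)_v$: for every $y\in (A_g)_v$ one has $\alpha y\in (A_f)_v(A_g)_v\subseteq (\alpha\beta)$, so $y\in(\beta)$, giving $(A_g)_v\subseteq(\beta)$. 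In particular $\beta\mid b_0$ and $\beta\mid b_1$, so the irreducible (hence nonunit) $\beta$ is a common factor of the coefficients of $g$, contradicting the primitivity of $g$. This rules out $\ell(c)=2$, leaving $\ell(c)\le 1$, i.e. $c$ is a unit or irreducible; this is condition b), and the Z-property follows.

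The one external fact I am leaning on is the $v$-content identity $(A_{fg})_v=(A_fA_g)_v$ for integrally closed $R$; I would either cite it (e.g. from \cite{G}) or note that the nontrivial inclusion $(A_fA_g)_v\subseteq (A_{fg})_v$ is precisely a standard characterization of integral closedness via contents. Everything else is routine $v$-operation bookkeeping, namely the inclusions $(A_f)_v(A_g)_v\subseteq (A_fA_g)_v$ and $A_h\subseteq R\Rightarrow (A_h)_v\subseteq R$, together with additivity of $\ell$ in an HFD.
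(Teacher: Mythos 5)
Your proof is correct, and while it shares the paper's overall skeleton---Theorem \ref{z} for the forward direction and, for the converse, the reduction via Proposition \ref{factor}, condition $(C)$ applied to the content ideals $A_f$ and $A_g$, and the content formula $(A_fA_g)_v=(A_{fg})_v$ for integrally closed domains \cite[Proposition 34.8]{G}---the core of your converse argument is genuinely different from the paper's. The paper builds auxiliary primitive polynomials $f'=\alpha x+c$ and $g'=\beta x+d$ with $c\notin(\alpha)$ and $d\notin(\beta)$, applies the content formula to get $f'g'=ah'$, uses the HFD hypothesis on leading coefficients to conclude $a=u\alpha\beta$, and then invokes a second consequence of integral closedness, namely \cite[Theorem 10.4]{G} on monic factors, to force $\alpha\mid c$ and $\beta\mid d$, a contradiction with the choice of $c$ and $d$. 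You never leave the original pair $f,g$: the content formula gives $c\mid\alpha\beta$, HFD length additivity gives $\ell(c)\le 2$, and in the only bad case $\ell(c)=2$ you deduce $(A_h)_v=R$, hence $(A_fA_g)_v=(\alpha\beta)$, and then cancellation of $\alpha$ yields $(A_g)_v\subseteq(\beta)$, contradicting primitivity of $g$. Your route is leaner: integral closedness enters exactly once (through the content formula), no auxiliary elements or polynomials need to be chosen, and the contradiction lands directly on primitivity rather than on a chosen witness. What the paper's more constructive version buys is a template for exhibiting explicit counterexamples: selecting irreducibles in $(A_f)_v$ and $(A_g)_v$ and multiplying them is precisely the method reused in Section 4 to show that $\mathbb{Q}[x,y,zx,zy]$ is not an HFD, with $x^2+y^2$ and $x^2+z^2x^2$ in the roles of the chosen irreducibles. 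Two trivial bookkeeping points in your write-up, both fine: $\ell$ must be read with the convention $\ell(u)=0$ for units, and the identification $c(A_h)_v=(\alpha\beta)(A_h)_v$ uses that $c$ and $\alpha\beta$ are associates, so the two principal multiples agree as $R$-submodules of the quotient field.
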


\begin{proof}
If $R$ has the Z-property, then $R$ is an HFD by Theorem \ref{z}. Now assume $R$ is an HFD, but $R$ does not have the Z-property. By \cite[Proposition 3.5]{B}, there exist primitive polynomials $f=a_0+a_1x$ and $g=b_0+b_1x$ such that $fg=ah$, where $h \in R[x]$ and $a$ is a nonzero nonunit that is not irreducible. Let $\alpha$ be an irreducible element such that $\alpha \in (A_f)_v$. Choose $c \in (A_f)_v$ such that $c \not\in (\alpha)$. Then $f^\prime=\alpha x+ c$ is a primitive polynomial such that $(A_{f^\prime})_v \subseteq (A_f)_v$. Similarly, we can find an irreducible element $\beta$ and a primitive polynomial $g^\prime=\beta x+d$ such that $(A_{g^\prime})_v \subseteq (A_g)_v$. Then \cite[Proposition 34.8]{G} implies $(A_{f^\prime g^\prime})_v \subseteq (A_{f g})_v \subseteq (a)$, so 

\[
f^\prime g^\prime=ah^\prime
\]\ 

\noindent for some $h^\prime \in R[x]$. Equating coefficients in this equation, we find that $a=u\alpha \beta$ for some unit $u$, since $R$ is an HFD. Dividing both sides of this equation by $\alpha \beta$, we obtain

\[
\left( x+\frac{c}{\alpha} \right) \left( x+\frac{d}{\beta} \right)=uh^\prime
\]\

\noindent Since $R$ is integrally closed, this equation and \cite[Theorem 10.4]{G} implies that $\alpha \mid c$ and $\beta \mid d$, a contradiction. 
\end{proof}

\section{An Application to the Ring $\mathbb{Q}[x,y,zx,zy]$}

The proof of the result in the previous section suggests one way (there may be others) to argue that the Krull domain 

\[
R:=\mathbb{Q}[x,y,zx,zy]
\]\

\noindent is not an HFD. Here, $\mathbb{Q}$ denotes the field of rational numbers and $x,y,z$ are indeterminates. The result that $R$ is not an HFD is at least a bit surprising because factorizations of monomials behave well, i.e., any two factorizations of a monomial have the same length. We note that the proof we now give is not valid if a field $F$ of characteristic 2 is substituted for $\mathbb{Q}$.  

\begin{prop}
The Krull domain $R:=\mathbb{Q}[x,y,zx,zy]$ is not an HFD.
\end{prop}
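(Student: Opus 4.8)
The plan is to follow the strategy behind the proof of Theorem~\ref{pepsi}: exhibit a single element of $R$ with two factorizations into irreducibles of different lengths, built from the failure of the Z-property that is already visible for this ring. First I would record the structural facts. Since $R$ is a Krull domain it is atomic and integrally closed, and because $R\subseteq\mathbb{Q}[x,y,z]$ its units are exactly the nonzero constants; in particular each of $x,y,zx,zy$ is irreducible in $R$, since a nontrivial $R$-factorization would descend to a nontrivial factorization of $x$, $y$, $zx$, or $zy$ in the UFD $\mathbb{Q}[x,y,z]$. The monomial computation $[x^2,y^2]=1$ and $[x^2,(zx)^2]=1$, together with the identity $x^2(zy)^2=(zx)^2y^2$, shows exactly as in the Introduction's example that $R$ fails the Z-property; the real task is to upgrade this into a genuine length discrepancy.

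The engine of the argument is the identity
\[
\bigl(x^2+(zx)^2\bigr)\bigl(x^2+y^2\bigr)=x^2\bigl(x^2+y^2+(zx)^2+(zy)^2\bigr),
\]
which follows at once from $x^2(zy)^2=(zx)^2y^2$. In the language of Theorem~\ref{pepsi} and condition $(C)$, the factors $\alpha:=x^2+(zx)^2$ and $\beta:=x^2+y^2$ are irreducible elements lying in the primitive ideals $(x^2,(zx)^2)$ and $(x^2,y^2)$, and the display is the relation $\alpha\beta=x^2\gamma$ with $\gamma:=x^2+y^2+(zx)^2+(zy)^2$. Granting that $\alpha$ and $\beta$ are irreducible, the left side is a product of exactly two atoms, whereas the right side factors as $x\cdot x\cdot\gamma$ with $\gamma$ a (nonconstant, hence nonunit) element, so any irreducible factorization of the right side has length at least three. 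This contradicts the half-factorial property and completes the proof.

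The main obstacle, and the one place where $\mathrm{char}\neq 2$ is essential, is proving that $\alpha=x^2+(zx)^2$ and $\beta=x^2+y^2$ are irreducible in $R$. For $\beta$ this is routine, since $x^2+y^2$ is already irreducible in $\mathbb{Q}[x,y]\subseteq\mathbb{Q}[x,y,z]$, so any $R$-factorization would have a constant, hence unit, factor. For $\alpha$ the subtlety is that $\alpha=x^2(1+z^2)$ is \emph{reducible} in the ambient ring $\mathbb{Q}[x,y,z]$, so I would instead analyze its factorizations there: because $1+z^2$ is irreducible over $\mathbb{Q}$, the nonunit divisors of $\alpha$ are, up to constants, $x,\ x^2,\ 1+z^2,\ x(1+z^2)$, and $\alpha$; one then checks that whenever $\alpha=fg$ with both $f,g$ nonunits, the complementary factor of any $R$-divisor of $\alpha$ necessarily involves $1+z^2$ with too few powers of $x$ and so escapes $R$, forcing one factor to be a unit. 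It is exactly here that $\mathbb{Q}$ cannot be replaced by a field of characteristic $2$: there $1+z^2=(1+z)^2$ and $x^2+y^2=(x+y)^2$, so $\alpha=(x+zx)^2$ and $\beta=(x+y)^2$ become perfect squares with $x+zx,\,x+y\in R$, the left side acquires length four, $\gamma=(x+y+zx+zy)^2$ makes the right side length four as well, and the contradiction disappears.
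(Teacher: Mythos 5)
Your proof is correct and takes essentially the same approach as the paper: the same identity $\left(x^2+y^2\right)\left(x^2+z^2x^2\right)=x^2\left(x^2+y^2+z^2x^2+z^2y^2\right)$, the same pair of irreducibles drawn from the primitive ideals $(x^2,y^2)$ and $(x^2,z^2x^2)$, and the same length comparison (two atoms on the left versus at least three on the right). The only difference is that you spell out the irreducibility checks and the failure in characteristic $2$, which the paper asserts without detail.
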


\begin{proof} We start by considering the two elements $f=x^2t+y^2$ and $g=x^2t+z^2x^2$ of the polynomial ring $R[t]$. The reason for considering these elements is because their product can be written

\[
\left( x^2t+y^2 \right) \left( x^2t+z^2x^2 \right) =x^2 \left( x^2t+(y^2+z^2x^2)t+z^2y^2 \right),
\]\

\noindent i.e., because $f$ and $g$ give an example for which Theorem \ref{factor} fails. Next, as suggested by Theorem \ref{pepsi}, we select irreducible elements in the ideals $(x^2,y^2)_v$ and $(x^2,z^2x^2)_v$. One such pair of irreducibles is $x^2+y^2$ and $x^2+z^2x^2$. The product of these two irreducibles can be written

\[
\left( x^2+y^2 \right) \left( x^2+z^2x^2 \right) =x^2 (x^2+y^2+z^2x^2+z^2y^2)
\]\

\noindent and it follows that $R$ is not an HFD. 
\end{proof}

\section{The Z-property in Krull Domains}

We show, among other things, that if a Krull domain $R$ has the Z-property, then $R$ is almost factorial, i.e., $R$ has torsion class group. 

For a nonzero nonunit element $x$ in a domain $R$, we will say that $x$ factors uniquely if whenever $x=\alpha_1 \alpha_2 \cdots \alpha_n=\beta_1 \beta_2 \cdots \beta_m$ and the $\alpha$'s and $\beta$'s are irreducible elements of $R$, then $m=n$ and after a suitable reordering of the indices we have $\alpha_i=u_ib_i$ for each $i$, where $u_i$ is a unit. 

\begin{prop}\label{tor}
Let $R$ be a Krull domain with nontorsion class group. Then there exists a nontorsion prime $P$ and an irreducible element $x \in P$ such that $x^2$ factors uniquely.
\end{prop}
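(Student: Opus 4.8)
The plan is to translate everything into the divisor theory of the Krull domain $R$ and to reduce the statement to a combinatorial fact about the class group $G := Cl(R)$. Write $\mathrm{Div}(R)$ for the free abelian group on the height-one primes of $R$, and for $0 \neq a \in R$ let $\mathrm{div}(a) = \sum_{P} v_P(a)\,P$ be its effective principal divisor, where $v_P$ is the essential valuation at $P$. I will use the standard facts that in a Krull domain $a \mid b$ iff $\mathrm{div}(a) \le \mathrm{div}(b)$, that $a$ and $b$ are associates iff $\mathrm{div}(a) = \mathrm{div}(b)$, and that $a$ is irreducible iff $\mathrm{div}(a)$ is not a sum of two nonzero effective principal divisors. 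Thus factorizations of an element correspond exactly to decompositions of its divisor into nonzero effective principal divisors, and ``factors uniquely'' becomes a statement purely about such decompositions. Let $S \subseteq G$ be the set of classes containing a height-one prime; since every class contains an effective divisor (scale a divisorial ideal into $R$), $S$ generates $G$ as a monoid, and a prime $P$ is nontorsion precisely when $[P]$ has infinite order.

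First I would produce the distinguished prime. Because $G$ is generated by $S$ and an abelian group generated by torsion elements is torsion, the hypothesis that $G$ is nontorsion forces some class $g = [P] \in S$ to have infinite order; this $P$ will be the required nontorsion prime. The heart of the argument is to choose the companion primes rigidly enough that the square of the element we build cannot be re-factored. Passing to $V := G \otimes_{\mathbb{Z}} \mathbb{Q}$, monoid generation writes $-g$ as a nonnegative integer combination of $S$, so $-\bar g$ lies in the cone spanned by $\bar S$. By Carath\'eodory's theorem for cones, $-\bar g = \sum_{i=1}^{r} c_i \bar s_i$ with each $c_i > 0$ and $\bar s_1, \ldots, \bar s_r$ linearly independent in $V$, where $s_1, \ldots, s_r \in S$ and $r \ge 1$ (as $\bar g \neq 0$). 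Since the $\bar s_i$ are independent and $\bar g$ depends on them, the lattice $L \subseteq \mathbb{Z}^{r+1}$ of integer relations among $g, s_1, \ldots, s_r$ has rank exactly one; let $w = (w_0, w_1, \ldots, w_r)$ be its primitive generator, normalized to be positive, so that comparison with $(1, c_1, \ldots, c_r)$ gives every $w_i \ge 1$. Choosing distinct height-one primes $P, T_1, \ldots, T_r$ representing the distinct classes $g, s_1, \ldots, s_r$, the effective divisor $D$ with exponent vector $w$ has class $w_0 g + \sum_i w_i s_i = 0$, hence $D = \mathrm{div}(x)$ for some nonzero nonunit $x \in R$ with $v_P(x) = w_0 \ge 1$, i.e.\ $x \in P$.

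It remains to check that $x$ is irreducible and that $x^2$ factors uniquely, and both follow from the rank-one relation lattice (identifying divisors supported on $\{P, T_1, \ldots, T_r\}$ with vectors in $\mathbb{Z}_{\ge 0}^{r+1}$, so that $2D$ corresponds to $2w$). Any irreducible factor $\beta$ of $x^2$ satisfies $\mathrm{div}(\beta) \le 2D$, so its support lies in $\{P, T_1, \ldots, T_r\}$ and $\mathrm{div}(\beta)$ is a nonzero element of $L \cap \mathbb{Z}_{\ge 0}^{r+1}$, which by rank-one-ness consists exactly of the nonnegative multiples $t\,w$. The constraint $\mathrm{div}(\beta) \le 2w$ forces $t \in \{1,2\}$, and $t = 2$ is impossible because $2w = w + w$ is a sum of two nonzero effective principal divisors; hence $\mathrm{div}(\beta) = w = \mathrm{div}(x)$, so $\beta$ is an associate of $x$. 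The same computation with $D = w$ in place of $2w$ shows $w$ is not a sum of two nonzero effective principal divisors, so $x$ is irreducible. Finally, writing $x^2 = \beta_1 \cdots \beta_m$ gives $2w = \sum_j \mathrm{div}(\beta_j)$ with each summand equal to $w$, forcing $m = 2$ and each $\beta_j$ an associate of $x$; thus $x^2$ factors uniquely.

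The main obstacle is precisely the rigidity needed to exclude alternative factorizations of $x^2$. Naive choices of the companion primes, such as $\mathrm{div}(x) = P + E$ for an arbitrary effective $E$ with $[E] = -g$, typically fail: the doubled part $2E$ can be redistributed among the two copies of $P$ to produce a genuinely different factorization of $x^2$. The function of Carath\'eodory's theorem is exactly to cut the support down to a linearly independent set $\bar s_1, \ldots, \bar s_r$, which is what forces the relation lattice $L$ to have rank one and hence makes $w = \mathrm{div}(x)$ the unique atom supported on $\{P, T_1, \ldots, T_r\}$. Establishing this rank-one property is the crux on which the whole argument turns.
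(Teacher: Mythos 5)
Your proposal is correct, but it reaches the conclusion by a genuinely different mechanism than the paper. The paper's proof is a purely extremal argument inside the divisor theory: it lets $S$ be the union of all nontorsion height-one primes, chooses $s \in S$ lying in the least possible number $\eta$ of height-one primes $P_1, \ldots, P_\eta$, and then, among all tuples of \emph{positive} integers $(f_1, \ldots, f_\eta)$ with $(P_1^{f_1} \cdots P_\eta^{f_\eta})_v$ principal, selects a lexicographically minimal one $(e_1, \ldots, e_\eta)$; the element $x$ with $(x) = (P_1^{e_1} \cdots P_\eta^{e_\eta})_v$ is then irreducible, and in a factorization $x^2 = ab$ the minimality of $\eta$ forces both $a$ and $b$ to have full support while the lexicographic minimality pins their exponents to the $e_i$, so $a$ and $b$ are associates of $x$. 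You instead obtain the rigidity from convex geometry: Carath\'eodory's theorem in $Cl(R) \otimes_{\mathbb{Z}} \mathbb{Q}$ cuts the support down to primes whose classes admit, together with $g$, only a rank-one lattice $L$ of integer relations, after which \emph{every} principal divisor supported on $\{P, T_1, \ldots, T_r\}$ is a positive multiple of the single generator $w$; this kills all alternative factorizations at once (indeed your argument shows $x^k$ factors uniquely for every $k \geq 1$, not just $k = 2$). As for what each approach buys: the paper's route is more elementary (no tensoring, no Carath\'eodory), but its key assertion that ``all the $f$'s and $g$'s are positive'' tacitly requires every nonunit factor of $x^2$ to lie in $S$, i.e., requires all of $P_1, \ldots, P_\eta$ to be nontorsion; this does follow from the minimality of $\eta$ (a torsion prime in the support could be deleted by passing to a suitable multiple of the divisor), but the paper leaves it implicit, whereas your rank-one lattice handles all factors uniformly with no such case analysis. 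Two small elisions in your write-up are worth filling in: (i) that $L$ has rank \emph{at least} one is not purely formal, since the relation $\bar g + \sum_i c_i \bar s_i = 0$ holds only in $V$ --- you must clear denominators and then multiply by the order of the resulting torsion element of $G$ to get an honest integer relation in $G$; and (ii) the classes $g, s_1, \ldots, s_r$ are pairwise distinct (so that representing primes are automatically distinct) because the $\bar s_i$ are independent and $g = s_i$ would give the nontrivial relation $(1 + c_i)\bar s_i + \sum_{j \neq i} c_j \bar s_j = 0$. Both are one-line fixes, and with them your proof is complete.
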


\begin{proof}
Let $S$ denote the union of all the prime ideals which are nontorsion in $Cl(R)$. For each nonzero $x \in S$, let $\eta(x)$ be the number of distinct height-one primes which contain $x$. We note that $1 < \eta(x) <\infty$ for each $x \in S \backslash \{0\}$. Choose $s \in S$ for which $\eta=\eta(s)$ is minimal, and let $P_1, P_2, \cdots, P_\eta$ denote the height-one primes which contain $s$. We now consider the collection of all $\eta$-tuples $(f_1, f_2, \ldots, f_\eta)$ of positive integers such that $(P_1^{f_1}P_2^{f_2} \cdots P_\eta^{f_\eta})_v$ is principal. Then we consider only those $\eta$-tuples in this collection for which $f_1$ is minimal, and among those that remain, we consider only those for which $f_2$ is minimal, etc. We then select one of the minimal objects that remain after this process and denote it by $(e_1, e_2, \cdots, e_\eta)$. Now if $(x)=(P_1^{e_1}P_2^{e_1} \cdots P_\eta^{e_\eta})_v$, then the minimality assumptions imply that $x$ is irreducible. If $x^2=ab$ where $a,b$ are nonunits, then $(a)=(P_1^{f_1}P_2^{f_2} \cdots P_\eta^{f_\eta})_v$ and $(b)=(P_1^{g_1}P_2^{g_2} \cdots P_\eta^{g_\eta})_v$. The first minimality assumption implies that all the $f$'s and $g$'s are positive integers, i.e., none of them are zero. The other minimality assumptions imply $f_i=g_i=e_i$ for all $i$, so $a$ and $b$ are associates of $x$. 
\end{proof}	

\begin{thm}\label{U}
Let $R$ be a Krull domain.  If $R$ has the Z-property and $x$ is a nonunit such that $x^n$ factors uniquely for some positive integer $n >1$, then the principal ideal $(x^n)$ is a product of principal symbolic prime powers.
\end{thm}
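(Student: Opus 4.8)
The plan is to translate everything into divisor theory and reduce the conclusion to a principality statement about individual symbolic prime powers. Since $R$ is Krull, let $P_1,\dots,P_r$ be the height-one primes containing $x$ and $a_i=v_{P_i}(x)>0$, so that $(x)=(P_1^{(a_1)}\cdots P_r^{(a_r)})_v$ and $(x^n)=(P_1^{(na_1)}\cdots P_r^{(na_r)})_v$. Because distinct height-one primes contribute independently to a divisor, a product of principal symbolic prime powers equal to $(x^n)$ must, upon collecting factors with the same underlying prime, reproduce each $P_i^{(na_i)}$ exactly; hence the theorem is equivalent to the statement that every $P_i^{(na_i)}$ is a principal ideal, i.e. that $na_i[P_i]=0$ in $Cl(R)$. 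This is the goal I would aim for.

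Next I would bring in the factorization hypothesis. Fix the irreducible factorization $x^n=\alpha_1\cdots\alpha_L$, unique up to order and associates. In a Krull domain two elements with equal divisor are associates, so the irreducible factorizations of $x^n$ are in bijection with the decompositions of the divisor $\sum_i na_iP_i$ into divisors of irreducible elements; uniqueness says there is a single such decomposition. I would try to prove that each $\operatorname{div}\alpha_j$ is supported on a single prime $P_i$ -- equivalently that each $(\alpha_j)$ is a (necessarily principal) symbolic prime power. Granting this, grouping the $\alpha_j$ by their prime writes each $P_i^{(na_i)}$ as a product of the principal ideals $(\alpha_j)$, so $P_i^{(na_i)}$ is principal and we are done.

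The role of the Z-property is to forbid irreducible factors whose support is spread over several primes once $x^n$ factors uniquely. Suppose some $\alpha=\alpha_j$ had $v_P(\alpha),v_{P'}(\alpha)>0$ for two distinct primes. Imitating the construction of the previous section but in the opposite direction, I would use integral closure to produce primitive degree-one polynomials $f,g$ whose coefficient $v$-ideals sit inside suitable symbolic powers of $P$ and of $P'$; since a Krull domain is a PvMD the content identity $(A_{fg})_v=(A_fA_g)_v$ holds, and it should manufacture a nonunit constant factor of $fg$ that is a proper power of $\alpha$ and therefore reducible. By Proposition \ref{factor} this is incompatible with the Z-property. The complementary task is to see that a mixed $\alpha$ could instead be split off into single-prime pieces, yielding a second, inequivalent irreducible factorization of $x^n$ and contradicting uniqueness. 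Either way the mixed factor is excluded. (That $x$ itself may be reducible causes no trouble, since the argument concerns only the irreducible factors $\alpha_j$ of $x^n$.)

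The hard part will be this third step: honestly converting the \emph{global} Z-property into the \emph{local} principality of $P_i^{(na_i)}$. The subtle point -- and the reason half-factoriality alone is not enough, so that the full Z-property is needed -- is the non-GCD phenomenon exploited in the example $\mathbb{Q}[x,y,zx,zy]$: a polynomial can be primitive (its coefficients having greatest common divisor $1$) while $(A_f)_v$ is a nontrivial divisorial ideal, so that a product of primitive polynomials can acquire a reducible constant factor. I expect the real labour to be the explicit construction of $f$ and $g$ from a putative non-principal symbolic power, the verification that the resulting constant factor is reducible, and the bookkeeping that turns such a factor into an alternative factorization of $x^n$ that the uniqueness hypothesis rules out.
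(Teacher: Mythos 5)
Your high-level skeleton does match the paper's: reduce to showing that each irreducible factor involved generates a symbolic prime power, and rule out a ``mixed'' irreducible (one whose divisor is supported on at least two height-one primes) by producing a violation of the Z-property. Steps one and two of your plan are correct, modulo one detail you should record: to apply anything to $\alpha_j^2$ you need $\alpha_j^2 \mid x^n$, which is not automatic for an irreducible factor of $x^n$; it follows because uniqueness forces the factorization of $x^n$ to be $n$ copies of the factorization of $x$ (the paper sidesteps this by reducing instead to an irreducible factor $a$ of $x$ itself, noting $a^n$ again factors uniquely). The genuine gap is your third step, which you explicitly defer (``it should manufacture a nonunit constant factor,'' ``I expect the real labour to be the explicit construction''): that construction \emph{is} the theorem, and nothing in your proposal supplies it. Worse, your sketch never identifies where the unique-factorization hypothesis enters, and it cannot be omitted: the bare claim ``the Z-property forbids mixed irreducibles'' is false. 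For instance $\mathbb{Z}[\sqrt{-5}]$ is Krull with class number $2$, so $\mathbb{Z}[\sqrt{-5}][t]$ is an HFD by Zaks' theorem and hence $\mathbb{Z}[\sqrt{-5}]$ has the Z-property; yet $1+\sqrt{-5}$ is irreducible with $(1+\sqrt{-5})=PQ$ for distinct height-one primes $P\neq Q$. Your fallback route is also circular: ``splitting a mixed $\alpha$ into single-prime pieces'' inside $R$ presupposes that the relevant symbolic prime powers are principal, which is exactly what is being proved.

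For comparison, here is the paper's construction and the precise point where uniqueness is used. Reduce to $x$ irreducible with $(x)=(P_1^{e_1}\cdots P_r^{e_r})_v$, $r\geq 2$. Choose $y\in R$ with $P_1=(x,y)_v$, choose $z\in P_1^{-1}\setminus R$, and let $m$ be the smallest positive integer with $xz^m\in R$ and $xz^{m+1}\notin R$ (it exists because a Krull domain is completely integrally closed). All of $x$, $yz$, $y$, and $x^2z^{m+1}=(xz)(xz^m)$ lie in $R$, and one has the identity $x\cdot x\cdot(yz)^{m+1}=y^{m+1}\cdot(x^2z^{m+1})$. Now uniqueness enters: since $x^2\mid x^n$, the element $x^2$ factors uniquely, so every nonunit divisor of $x^2$ is divisible by $x$; as $x\nmid y^{m+1}$ (otherwise $y\in P_2$, contradicting $P_1=(x,y)_v\not\subseteq P_2$) and $x\nmid x^2z^{m+1}$ (otherwise $xz^{m+1}\in R$), we get $[x^2,y^{m+1}]=1$ and $[x^2,x^2z^{m+1}]=1$, contradicting the Z-property. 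Note this is done directly on elements; no appeal to Proposition \ref{factor} or to the PvMD content formula is needed, and your proposed detour through those would still have to reproduce essentially this same choice of $y$, $z$, and $m$.
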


\begin{proof}
We assume $x^n$ factors uniquely, but $(x^n)$ is not a product of principal symbolic prime powers. Note that if $a$ is a nonunit such that $a | x$, then $a^n$ factors uniquely. Therefore, we may assume that $x$ is irreducible. Then $(x)$ is not a symbolic prime power. In other words, $(x)=(P_1^{e_1}P_2^{e_2} \cdots P_n^{e_n})_v$, where $n>1$ and the $P_i$'s are height one primes with $P_i \neq P_j$ for $i \neq j$. Now choose $y \in R$ such that $P_1=(x,y)_v$, and choose $z \in {P_1}^{-1}$ such that $z \not\in R$. Let $n$ be the smallest positive integer such that $xz^n \in R$ and $xz^{n+1} \not\in R$. (Such an $n$ exists because $R$ is completely integrally closed.) Consider the equation

\[
x^2(yz)^{n+1}=y^{n+1}(x^2z^{n+1})
\]\

\noindent In this equation, we have 

\[
[x^2,y^{n+1}]=1 \ \text{and} \ [x^2,x^2z^{n+1}]=1
\]\

\noindent and therefore $R$ does not have the Z-property. Indeed, if $x | y^{n+1}$, then $y^{n+1} \in P_2$, so $y \in P_2$, a contradiction. Also, if $x |x^2z^{n+1}$, then $xz^{n+1} \in R$, a contradiction.
\end{proof}

\begin{cor}\label{torsion}
Let $R$ be a Krull domain. If $R$ has the Z-property, then $Cl(R)$ is torsion. 
\end{cor}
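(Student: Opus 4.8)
The plan is to combine Proposition \ref{tor} and Theorem \ref{U} and argue by contradiction. Assume $R$ has the Z-property but that $Cl(R)$ is not torsion. Applying Proposition \ref{tor}, I obtain a height-one prime $P$ whose class in $Cl(R)$ has infinite order, together with an irreducible element $x \in P$ for which $x^2$ factors uniquely. This element $x$ is the witness I will push through Theorem \ref{U}.

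Since $2 > 1$ and $x^2$ factors uniquely, Theorem \ref{U} applies and tells me that $(x^2)$ is a product of principal symbolic prime powers. The next step is to read this conclusion off in the divisor group of the Krull domain $R$. Writing the divisorial factorization of the principal ideal as $(x) = \bigl(\prod_i P_i^{e_i}\bigr)_v$, where the $P_i$ are the finitely many height-one primes containing $x$ and each $e_i > 0$, I have $(x^2) = \bigl(\prod_i P_i^{2e_i}\bigr)_v$. Because factorization into height-one primes is unique in a Krull domain, the statement that $(x^2)$ is a product of principal symbolic prime powers forces each factor $(P_i^{2e_i})_v$ to be principal. Equivalently, $2e_i\,[P_i] = 0$ in $Cl(R)$, so every prime $P_i$ in the support of $(x)$ has torsion class.

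Since $x \in P$ and $P$ is a height-one prime, $P$ occurs among the $P_i$; hence $[P]$ is torsion, contradicting the choice of $P$ as a nontorsion prime. This contradiction shows that $Cl(R)$ must be torsion. The step I expect to require the most care is the translation in the second paragraph: making precise that ``a product of principal symbolic prime powers'' forces each symbolic-power factor in the unique divisorial factorization of $(x^2)$ to be principal, together with the standard dictionary that a symbolic power $(P^m)_v$ is principal exactly when $m[P] = 0$ in $Cl(R)$. Once this is in hand, the contradiction with the nontorsion prime $P$ is immediate.
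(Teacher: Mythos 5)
Your proposal is correct and follows exactly the paper's route: combine Proposition \ref{tor} and Theorem \ref{U}, then derive a contradiction with the nontorsion prime $P$. The paper's own proof is just a terser version of this (it leaves the final implication ``$(x^2)$ a product of principal symbolic prime powers $\Rightarrow$ $[P]$ torsion'' unexpanded), and your second paragraph correctly supplies that missing detail via unique divisorial factorization in a Krull domain.
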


\begin{proof}
This follows by Proposition \ref{tor} and Theorem \ref{U}. For, assume $Cl(R)$ is not torsion. Let $P$ be a nontorsion prime and $x \in P$ be such that $x^2$ factors uniquely. Then $(x^2)$ is a product of principal symbolic prime powers. This implies $P$ is torsion, a contradiction.
\end{proof}

\begin{ex}
If $x,y,z$ are indeterminates, then the domain $F[x,y,zx,zy]$ is Krull with infinite cyclic class group \cite[Theorem 45.9]{G}. Therefore it does not have the Z-property by Corollary \ref{torsion}.
\end{ex}

\begin{ex}
Let $R$ be Dedekind with $Cl(R) \cong \mathbb{Z}_3= \{ [0], [1], [2] \}$ such that the ideal class corresponding to $[1]$ contains all nonprincipal primes.  (Such a domain exists by \cite[Corollary 1.5]{Gr}).  Then $R$ has the Z-property. 
\end{ex}

\begin{proof}
Assume $abc=de$; we claim $[ab,d]\neq 1 $ or $[ab,e] \neq 1$.  We can assume $a,b$ are nonprime irreducibles. Write $(a)=P_1P_2 P_3$ and $(b)=P_4P_5P_6$.  Then there are ideals $I,J$ such that $(d)=P_{i_1} \cdots P_{i_k} I$ and $(e)=P_{i_{k+1}} \cdots P_{i_6} J$ for some permutation $i_1, \ldots , i_6$ of the indices $1, \ldots, 6$.  If $k \geqslant 3$, then it follows $[ab,d] \neq 1$.  Else $6-k \geqslant 3$ and it follows $[ab,e] \neq 1$.  
\end{proof}

\begin{thm}
Let $R$ be an atomic integrally closed domain in which every $v$-finite $v$-ideal is $v$-generated by two elements. Then $R$ has the Z-property if and only if $R$ is an HFD and for each primitive ideal $I \subseteq R$ there exists an irreducible element $\alpha$ such that $\alpha \in I_v$.
\end{thm}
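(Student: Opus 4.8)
The statement is a biconditional, and I would prove the two implications separately, leaning on Theorem \ref{pepsi} for the ``if'' direction and on Proposition \ref{factor} for the ``only if'' direction. The easy half is the backward implication. Suppose $R$ is an HFD in which every primitive ideal $I$ has an irreducible element lying in $I_v$. The plan is simply to notice that this hypothesis is formally stronger than condition (C): condition (C) only demands an irreducible in $I_v$ when $I=(a_0,a_1)$ is a primitive ideal generated by \emph{two} elements, and that is a special case of an arbitrary primitive ideal. Thus $R$ is an atomic, integrally closed domain satisfying (C), and Theorem \ref{pepsi} applies verbatim: since $R$ is an HFD it has the Z-property. This direction is essentially immediate once the implication to (C) is observed.

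For the forward implication, assume $R$ has the Z-property. Then $R$ is an HFD by Theorem \ref{z}, so the work is to produce, for each primitive ideal $I$, an irreducible element of $I_v$. I would first dispose of the trivial case $I_v=R$ (any atom of $R$ lies in $R=I_v$) and assume $I_v\subsetneq R$. Using the two-generator hypothesis, write $I_v=(a_0,a_1)_v$; since $I$ is primitive, no nonunit divides both generators, so $[a_0,a_1]=1$ and $f=a_1x+a_0$ is a primitive polynomial of degree one. The strategy is then to manufacture a second primitive linear polynomial $g$ so that the product $fg$ acquires a nonunit constant factor lying in $I_v$, and to let Proposition \ref{factor} finish the job. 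Concretely, suppose I can find a primitive ideal $J=(b_0,b_1)$ with $(IJ)_v=(c)$ principal and $c$ a nonunit, and set $g=b_1x+b_0$. Because $R$ is integrally closed, $(A_{fg})_v=(A_fA_g)_v=(IJ)_v=(c)$, so $c$ is a constant factor of $fg$; and since $IJ\subseteq I$ we get $c\in(IJ)_v\subseteq I_v$. Proposition \ref{factor} then forces the nonunit $c$ to be irreducible, yielding the desired atom in $I_v$.

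The crux is thus the construction of the auxiliary primitive ideal $J$, i.e.\ showing that some ``multiple'' of $I$ becomes principal after its principal part is cleared. When $R$ is a Krull domain this is exactly what the earlier machinery supplies: by Corollary \ref{torsion} the class of $I$ has finite order, so taking any integral representative of the inverse class and factoring out its element-gcd produces a primitive $J$ whose class is the inverse of that of $I$; then $(IJ)_v$ is principal, and it is generated by a nonunit precisely because $I_v\subsetneq R$ keeps the associated divisor from vanishing. Carrying out this construction in the stated generality --- atomic, integrally closed, with every $v$-finite $v$-ideal two-generated, but \emph{not} assumed Krull --- is the step I expect to be genuinely delicate, since it demands the analogue of the torsion conclusion of Corollary \ref{torsion} for the $v$-class monoid of such a domain, together with a primitive representative in the inverse class. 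I would try to extract this torsion behaviour directly from the Z-property by mimicking the symbolic-power equation used in the proof of Theorem \ref{U}, and I expect that is where the real difficulty lies; the polynomial reduction of the previous paragraph, together with Proposition \ref{factor}, should then convert it immediately into the required irreducible element of $I_v$.
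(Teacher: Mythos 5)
Your backward implication is exactly the paper's: the hypothesis visibly implies condition (C), so Theorem \ref{pepsi} applies and the HFD property gives the Z-property. That half is fine.

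The forward implication, however, has a genuine gap, and you have located it yourself: you never construct the auxiliary primitive ideal $J$ with $(IJ)_v$ principal and generated by a nonunit, and your fallback plan --- extracting a torsion statement for a ``$v$-class monoid'' from the Z-property by imitating Theorem \ref{U} --- is unlikely to succeed in this generality. The proofs of Proposition \ref{tor}, Theorem \ref{U} and Corollary \ref{torsion} rely on height-one primes, symbolic prime powers and complete integral closure, none of which are available in an arbitrary atomic integrally closed domain, and the present theorem's hypotheses give no handle on any class-group structure. More importantly, the detour through principality of $(IJ)_v$ is unnecessary. The paper's trick is to apply the two-generator hypothesis not to some integral ideal in an ``inverse class'' but to $I^{-1}$ itself: write $I=(a_0,a_1)_v$ and $I^{-1}=(b_0,b_1)_v$ with $b_0,b_1$ in the quotient field $K$, set $f=a_1x+a_0$ and $h=b_1x+b_0\in K[x]$, and let $\alpha\in I_v$ be an element of \emph{minimal length} (length makes sense because $R$ is already known to be an HFD by Theorem \ref{z}). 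Then $g=\alpha h\in R[x]$ because $\alpha I^{-1}\subseteq R$; $g$ is primitive because a common nonunit factor $c$ of its coefficients would give $(\alpha/c)I^{-1}\subseteq R$, putting $\alpha/c\in(I^{-1})^{-1}=I_v$ with strictly smaller length; and $\phi=fh\in R[x]$ because its coefficients lie in $II^{-1}\subseteq R$. The identity $fg=\alpha\phi$ then exhibits $\alpha$ as a constant factor of a product of two primitive degree-one polynomials, so Proposition \ref{factor} forces $\alpha$ to be a unit or irreducible; the unit case is precisely your trivial case $I_v=R$, where any atom of $R$ serves. Thus the desired irreducible element of $I_v$ falls out of minimality of length plus Proposition \ref{factor}, with no principality or torsion input at all --- which is the idea your proposal is missing.
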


\begin{proof}
Assume $R$ has the Z-property. Then $R$ is an HFD by Theorem \ref{z}. Now suppose $I$ is a primitive ideal of $R$ and let $\alpha \in I_v$ be an element of minimal length. By assumption, we have $I=(a_0,a_1)_v$ and $I^{-1}=(b_0,b_1)_v$ where $a_0,a_1 \in R$, $b_0,b_1 \in K$, and $K$ is the quotient field of $R$. Let $f=a_1x+a_0$ and $h=b_1x+b_0$. Then $g=\alpha h \in R[x]$ and $g$ is primitive by the minimality assumption. If $\phi=fh$, then $\phi \in R[x]$. Finally, we apply Proposition \ref{factor} to the equation $fg=\alpha \phi$ to obtain that $\alpha$ is irreducible. The converse was proved in Theorem \ref{pepsi}.
\end{proof}

\begin{cor}
Let $R$ be a Krull domain. Then $R$ has the Z-property if and only if $R$ is an HFD and for each primitive ideal $I \subseteq R$ there exists an irreducible element $\alpha$ such that $\alpha \in I_v$.
\end{cor}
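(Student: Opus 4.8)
The plan is to deduce this corollary directly from the preceding theorem by verifying that every Krull domain satisfies the three standing hypotheses of that theorem: that $R$ is atomic, that $R$ is integrally closed, and that every $v$-finite $v$-ideal of $R$ is $v$-generated by two elements. Once these three conditions are in hand, the biconditional in the corollary is literally the conclusion of the theorem, so no further argument is required.

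First I would recall that a Krull domain is integrally closed, this being part of the standard characterization of Krull domains (an intersection of discrete valuation rings of finite character). Next I would note that Krull domains satisfy the ascending chain condition on divisorial ideals, and in particular on principal ideals, which forces every nonzero nonunit to factor into a finite product of irreducibles; hence $R$ is atomic. For the two-generator condition I would appeal to the result already quoted in Section~2, namely that every $v$-ideal in a Krull domain is $v$-generated by two elements \cite[Corollary 44.6]{G}. Since this applies to \emph{all} $v$-ideals, it applies in particular to the $v$-finite ones, so the hypothesis of the theorem is met (indeed with room to spare).

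Having established the three hypotheses, I would simply invoke the preceding theorem to conclude that $R$ has the Z-property if and only if $R$ is an HFD and every primitive ideal $I \subseteq R$ contains an irreducible element in its $v$-closure $I_v$.

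There is essentially no obstacle here, since the work is entirely in the theorem that precedes it; the only point requiring any care is to confirm that the cited two-generator property \cite[Corollary 44.6]{G} is being applied to the correct class of ideals, and to note that the stronger statement available for Krull domains (every $v$-ideal, not merely every $v$-finite one) comfortably subsumes the form of the hypothesis used in the theorem.
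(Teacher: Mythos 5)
Your proposal is correct and matches the paper's intended argument: the corollary is stated without proof precisely because it follows by specializing the preceding theorem, using the facts that a Krull domain is integrally closed, is atomic (via ACC on divisorial, hence principal, ideals), and has every $v$-ideal $v$-generated by two elements per \cite[Corollary 44.6]{G}, which the paper already quoted in Section~2. Your verification of the three hypotheses is exactly the missing routine check, with no gaps.
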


\bibliographystyle{alpha}

\begin{thebibliography}{AAZ}

\bibitem[B]{B}
M. Batell,  \emph{The half-factorial property in polynomial rings}, Dissertation, North Dakota State University, ProQuest, UMI Dissertations Publishing, 2014. 

\bibitem[C]{C}
J. Coykendall,
A characterization of polynomial rings with the half-factorial property,
Lecture Notes in Pure and Appl. Math., 189,
\emph{Marcel Dekker}, 1997, 291--294.

\bibitem[G]{G}
R. Gilmer,
Multiplicative Ideal Theory,
Marcel Dekker, 1972.

\bibitem[Gr]{Gr}
A. Grams,
\emph{The distribution of prime ideals of a Dedekind domain},
Bull. Austral. Math. Soc. \textbf{11} (1974), 429–-441.


\bibitem[Za]{Za}
A. Zaks,
Half-factorial domains,
\emph{Israel J. Math.}, 
\textbf{37} (1980), 281--302.



















\end{thebibliography}

\end{document}